\newcommand{\ZZ}{{\mathbb{Z}}}
\newcommand\zzn{{\ZZ_3^n \times \ZZ}}
\DeclareMathOperator\Ker{\rm Ker}
\DeclareMathOperator\Irr{\rm Irr}
\DeclareMathOperator\tC{\widetilde{C}}
\DeclareMathOperator\tA{\widetilde{A}}
\DeclareMathOperator\tB{\widetilde{B}}
\DeclareMathOperator\tX{\widetilde{X}}
\newtheorem{theorem}{Theorem}[section]
\newtheorem{hypothesis}[theorem]{Hypothesis}
\newtheorem{proposition}[theorem]{Proposition}
\newtheorem{corollary}[theorem]{Corollary}
\newenvironment{proofof}[1]{\noindent{\it Proof of #1.}}
{\hfill$\square$\\\mbox{}}
\newtheorem{question}[theorem]{Question}
\newtheorem{remark}[theorem]{Remark}
\newcommand\Tstrut{\rule{0pt}{2.9ex}}         
\newcommand\Bstrut{\rule[-1.2ex]{0pt}{0pt}}   
\newcommand\TBstrut{\Tstrut\Bstrut}           
\title{Gelfand pairs for affine Weyl groups}
\author{P\'al Heged\"us}
\address{Budapest University of Technology and Economics, Institute of Mathematics, Műegyetem rkp 3, H-1111, Budapest, Hungary}
\email{hegpal@math.bme.hu}
\date{
	\today}
\thanks{
This research was partially supported by Hungarian National Research, Development and Innovation Office (NKFIH) Grant No.~K138596. The project leading to this application has received funding from the European Research Council (ERC) under the European Union's Horizon 2020 research and innovation programme (grant agreement No. 741420).
}
\begin{document}
	\begin{abstract} This paper is motivated by several combinatorial actions of the affine Weyl group of type $\tC_n$.
		Addressing a question of David Vogan, 
		it was shown in an earlier paper that 
		these permutation representations are essentialy multiplicity-free~\cite{AHR}.   
		However, the Gelfand trick, which was indispensable in~\cite{AHR} to prove this property for types $\tC_n$ and $\tB_n$, is not applicable for other classical types.
		Here we present a unified approach to  
		fully answer the analogous question for all irreducible affine Weyl groups.
		Given a finite Weyl group 
		$W$ with maximal parabolic subgroup $P\leq W$, there corresponds to it a reflection subgroup $H$ of the affine Weyl group $\widetilde W$.
		It turns out that while the Gelfand property of $P\leq W$ does not imply that of $H\leq \widetilde{W}$, but $Q=N_W(P)\leq W$ has the Gelfand property if and only if $K=QH\leq \widetilde{W}$ has.
		Finally, for each irreducible type we describe when $(W,Q)$ is a Gelfand pair.
	\end{abstract}
	\maketitle
	
	\section{Introduction}
	Our previous paper~\cite{AHR} was motivated by several natural finite actions of the affine Weyl group $G$ of type $\tC_n$, see Section~2 there. We concluded that those were ``essentially'' multiplicity-free, that is, the actions are not multiplicity-free, but there exist a $G$-invariant coupling of the underlying set such that the $G$-action on the couples is multiplicity-free.
	
	Our approach included an inflated action of $G$ on an infinite set $\zzn$ with point stabilizer $H$. Dealing with infinite action led to the definition of the \emph{proto-Gelfand pair}\cite[Definition~3.3]{AHR}: let $A\leq B$ for a not necessarily finite group $B$, we call $(B,A)$ a proto-Gelfand pair if for every homomorphism $\varphi:B\mapsto B_1$ with finite image $(\varphi(B),\varphi(A))$ is a Gelfand pair. We also say that $A$ is a proto-Gelfand subgroup of $B$.
	
	We proved
	\begin{theorem}\cite[Theorem~1.7]{AHR}
		Let $G$ be an affine Weyl group of type $\tC_n$ or $\tB_n$.
		For every $\omega\in \zzn$ whose stabilizer is not all of $G$ (namely, $\omega$ is not $G$-invariant), there exists a double cover of the stabilizer which is a proto-Gelfand subgroup of $G$.
	\end{theorem}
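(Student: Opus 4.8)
The plan is to invoke the \emph{Gelfand trick} and reduce the statement to a purely combinatorial double-coset identity inside $G$. Recall the relevant form of the trick for a possibly infinite group: if $B$ is a group, $A\le B$, and $\tau\colon B\to B$ is an anti-automorphism of the shape $\tau(b)=xb^{-1}x^{-1}$ for a fixed $x\in B$ normalising $A$, and if $\tau(b)\in AbA$ for every $b\in B$, then $(B,A)$ is a proto-Gelfand pair. Indeed, any homomorphism $\varphi$ of $B$ with finite image carries $\tau$ to the anti-automorphism $\bar b\mapsto\varphi(x)\,\bar b^{-1}\,\varphi(x)^{-1}$ of $\varphi(B)$, which stabilises $\varphi(A)$ and fixes every $\varphi(A)$-double coset; hence the Hecke algebra $\CC[\varphi(A)\backslash\varphi(B)/\varphi(A)]$ is commutative and $(\varphi(B),\varphi(A))$ is a Gelfand pair. (The role of the twist $x$ is that inversion composed with an \emph{inner} automorphism descends to every quotient; plain inversion, $x=e$, is the case where the pair is already symmetric.) So the task is, for each non-invariant $\omega$, to produce an overgroup $\widehat S$ of $S:=\Stab_G(\omega)$ of index at most two --- the ``double cover'' --- together with an $x\in N_G(\widehat S)$, and to verify $x g^{-1}x^{-1}\in\widehat S\,g\,\widehat S$ for all $g\in G$.

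First I would fix explicit coordinates for the inflated action of $G$ on $\zzn$, realising $G$ as a group of signed permutations and lattice translations of its classical type and listing the $G$-orbits. For a non-invariant $\omega$ this identifies $S=\Stab_G(\omega)$ as a reflection subgroup of $G$ which, up to $G$-conjugacy, takes one of finitely many shapes, governed by which coordinates of $\omega$ agree and by the residue of its $\ZZ$-coordinate. This reduces the theorem to a short explicit list of standard pairs in types $\tC_n$ and $\tB_n$.

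Next I would build the double cover. Let $\iota$ be the natural $G$-equivariant involution of $\zzn$ (negation in the $\ZZ$-coordinate, corrected by a fixed signed permutation when the diagram carries an order-two symmetry), and set $\widehat S:=\Stab_G(\{\omega,\iota\omega\})$. When $\omega$ is not $\iota$-fixed and $S\ne\Stab_G(\iota\omega)$ this is a genuine index-two overgroup of $S$, and replacing $\omega$ by the couple $\{\omega,\iota\omega\}$ is exactly the device that upgrades the merely ``essentially'' multiplicity-free action to a multiplicity-free one; the $\iota$-fixed values of $\omega$ are treated directly with $\widehat S=S$. For $x$ I would first try $x=e$, in the hope that $(G,\widehat S)$ is symmetric, and otherwise take a short translation or a diagram-symmetry element normalising $\widehat S$; checking $\tau(\widehat S)=\widehat S$ is then routine bookkeeping once $\iota$, $\widehat S$ and $x$ are written in coordinates.

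The hard part is the double-coset identity $xg^{-1}x^{-1}\in\widehat S\,g\,\widehat S$ for all $g\in G$. I would attack it by parametrising $\widehat S\backslash G/\widehat S$ through a complete set of combinatorial invariants of a double coset --- read off from the signed-permutation and translation parts of a minimal-length double-coset representative relative to $\widehat S$ --- and then showing that each such invariant is unchanged under $g\mapsto xg^{-1}x^{-1}$. This is the technical heart of the argument: it is here that the analysis splits by type ($\tC_n$ versus $\tB_n$) and by the finitely many shapes of $S$, and here that one uses the particular structure at the ends of the $\tC$ and $\tB$ diagrams which makes the trick go through --- the same mechanism being, as the introduction notes, unavailable for the other classical affine types.
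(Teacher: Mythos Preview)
Your outline correctly reconstructs the approach of the original source: the present paper does not itself prove this theorem but cites it from \cite{AHR}, and it explicitly records that ``Our proof in \cite{AHR} was based on the so called `Gelfand's trick'\,''. Your plan --- a twisted anti-involution $\tau(b)=xb^{-1}x^{-1}$, the double cover built from a $G$-equivariant pairing $\omega\leftrightarrow\iota\omega$ (exactly the ``$G$-invariant coupling'' of the introduction), and the reduction to a double-coset identity --- is precisely that strategy, and your remark that the trick descends to every finite quotient because conjugation-plus-inversion does is the right justification for the \emph{proto}-Gelfand conclusion.

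That said, what you have is a roadmap, not a proof. The step you label ``the hard part'' --- parametrising $\widehat S\backslash G/\widehat S$ by explicit invariants and checking each is unchanged under $g\mapsto xg^{-1}x^{-1}$ --- is asserted but not executed, and in \cite{AHR} this is a genuine case-by-case computation across the finitely many stabiliser shapes. Your construction of $\iota$ and of the twist $x$ is also left schematic (``I would first try $x=e$\ldots and otherwise take\ldots''). None of this is wrong as a plan, but a referee would ask you to actually carry it out.

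It is worth noting the contrast with the present paper's method. Theorem~\ref{main} and Proposition~\ref{prop:main} recover (a generalisation of) this result via Clifford theory and induced characters, bypassing the double-coset combinatorics entirely. The paper stresses that the Gelfand-trick route you describe does \emph{not} extend to the other classical affine types --- see the discussion around Corollary~\ref{th:A_n}, where for $\tA_n$ the relevant double cosets contain no involutions yet the Gelfand property still holds. So your sketch is faithful to \cite{AHR}, but the character-theoretic argument here is what makes the uniform treatment possible.
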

	
	The action above can be understood in terms of the affine permutations and also in terms of the fundamental domain.
	Here we do not explore the possible actions but concentrate only on the Gelfand property of the stabilizer. We prove the following, analogous theorem for all the affine Weyl groups. As a general reference for Coxeter groups in general and affine Weyl groups in particular we use \cite{Humphreys:1990}.

	Let $\Phi$ be an irreducible root system 
	with fundamental roots $\Delta$. Let $W$ be its Weyl group, $X=L(\Phi^\vee)$ the translation group with its coroot lattice and $G=W\ltimes X$ the corresponding affine Weyl group.
	
	Delete a node from $\Delta$ and let $\Phi_0$ be the (typically reducible) root system generated by the remaining fundamental roots 
	and let $H$ be the reflection subgroup of $G$ generated by the affine reflections corresponding to $\Phi_0$.
	
	\begin{theorem}\label{main}
		If $P=H\cap W$ or its normalizer $Q=N_W(P)$ is a Gelfand subgroup of $W$ then $K=QH$ is a proto-Gelfand subgroup of $G$. Otherwise it is not.
		
		In particular, this property holds for types
		\begin{description}
			\item[$A_n,B_n,C_n,G_2$] for every removed node;			
			\item[$D_n$] for every removed node if $n$ is odd and for $k\leq 2[n/4]+1$ or $k\geq n-1$ if $n$ is even (for the numbering see Figure~\ref{fig:rootsys}, below);
			\item[$E_6,E_7$] if one of the three degree one nodes is removed;
			\item[$E_8$] if one of the two farthest endnodes is removed;
			\item[$F_4$] if one of the two endnodes is removed.
		\end{description}
	\end{theorem}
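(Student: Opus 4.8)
The crux is the equivalence ``$(W,Q)$ is a Gelfand pair $\iff$ $(G,K)$ is a proto-Gelfand pair''; the explicit list then comes down to classifying, type by type, when $(W,Q)$ is a Gelfand pair for $Q=N_W(P)$ the normalizer of a maximal parabolic. I begin by pinning down the subgroups. Writing $X_0=L(\Phi_0^\vee)\le X$, the relations $s_{\alpha,k}s_{\alpha,0}=t_{k\alpha^\vee}$ show $X_0\le H$, and the reverse containment is clear, so $H=P\ltimes X_0$ and $H\cap W=P$; since $Q=N_W(P)$ permutes the roots of $\Phi_0$ it preserves $X_0$, so $K=QH=Q\ltimes X_0$. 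Also $\mathrm{Ind}_Q^W\mathbf 1$ is a summand of $\mathrm{Ind}_P^W\mathbf 1=\mathrm{Ind}_Q^W(\CC[Q/P])$, so the Gelfand property of $P$ implies that of $Q$, and the hypothesis of the theorem is just ``$(W,Q)$ is a Gelfand pair.'' \textbf{Step 1.} Every finite quotient of $G=W\ltimes X$ is dominated by $W\ltimes(X/(N\cap X))$ for a $W$-stable finite-index sublattice, and every such sublattice contains $mX$ for $m$ its exponent; since the Gelfand property passes to further quotients (a quotient permutation module is a submodule, hence a summand, of the original), $(G,K)$ is proto-Gelfand iff $(\bar G_m,\bar K_m)$ is a Gelfand pair for all $m\ge 1$, where $\bar G_m=W\ltimes(X/mX)$ and $\bar K_m=Q\ltimes\overline{X_0}$. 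At $m=1$ this is $(W,Q)$; this already gives the ``otherwise'' clause and one direction of the equivalence.

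\textbf{Step 2 (Clifford theory for $\bar G_m$).} Fix $m$, put $V=X/mX$; characters of $V$ extend to their $W$-stabilizers without obstruction, so $\mathrm{Irr}(\bar G_m)=\{\pi_{\chi,\rho}\}$ is indexed by a $W$-orbit of characters $\chi$ of $V$ together with $\rho\in\mathrm{Irr}(\Stab_W(\chi))$. Inducing $\mathrm{Ind}_{\bar K_m}^{\bar G_m}\mathbf 1$ in stages through $Q\ltimes V$ and running the Mackey formula, the multiplicity of $\pi_{\chi,\rho}$ turns out to equal the multiplicity of $\rho$ in the permutation module $\CC[Y_\chi]$ of $S_\chi:=\Stab_W(\chi)$, where
$$Y_\chi=\{\,w\Phi_0\ :\ w\in W,\ \chi\ \text{vanishes on the image of}\ L((w\Phi_0)^\vee)\ \text{in}\ V\,\}$$
is an $S_\chi$-stable subset of the $S_\chi$-set $W/Q$ of all $W$-conjugates of $\Phi_0$. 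Hence $(\bar G_m,\bar K_m)$ is a Gelfand pair iff $\CC[Y_\chi]$ is a multiplicity-free $S_\chi$-module for every $\chi$; for $m=1$ this reads $\CC[W/Q]$ multiplicity-free, i.e.\ $(W,Q)$ a Gelfand pair. Thus the content of the remaining (hard) direction is: \emph{if $(W,Q)$ is a Gelfand pair, then $\CC[Y_\chi]$ is multiplicity-free for every $m$ and every $\chi$.}

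\textbf{Step 3 (the obstacle).} Here $S_\chi$ is a point-stabilizer for the (extended) affine Weyl group on $\RR^n$, hence, up to $W$-conjugacy, one of a finite explicit list of reflection subgroups of $W$ read off the extended Dynkin diagram; and on each $S_\chi$-orbit of $Y_\chi$ the pair $(S_\chi,\ S_\chi\cap wQw^{-1})$ is again of ``normalizer of a maximal parabolic'' type for the sub-root system cut out by $S_\chi$, so the condition is self-referential in smaller rank. The plan is to verify ``$\CC[Y_\chi]$ multiplicity-free'' by descending induction along this list. I expect the main obstacle to be exactly that $\CC[W/Q]$, though multiplicity-free over $W$, can acquire multiplicities after restriction to a point-stabilizer $S_\chi$: one must control this restriction, and in particular rule out collisions between the constituents coming from \emph{different} $S_\chi$-orbits of $Y_\chi$ (the set $Y_\chi$ is generally not a single orbit). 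Nothing formal prevents such collisions, and tracking them is precisely what forces the restriction $k\le 2[n/4]+1$ or $k\ge n-1$ in type $D_n$ for $n$ even — the one family where the naive induction genuinely fails — while in all other cases the diagram combinatorics make $\CC[Y_\chi]$ multiplicity-free.

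\textbf{Step 4 (the explicit list).} It remains to classify the Gelfand pairs $(W,Q)$ with $Q=N_W(P)$, $P$ a maximal parabolic. For $A_n$, $B_n$, $C_n$ this follows from Young's/Pieri's rule and its hyperoctahedral analogue (induction of the trivial character from a two-block parabolic is multiplicity-free, and enlarging to the normalizer preserves this); $G_2$ and the exceptional types $E_6,E_7,E_8,F_4$ are finite computations, yielding the stated endnodes; and $D_n$ requires a direct combinatorial analysis of $\mathrm{Ind}_Q^{W(D_n)}\mathbf 1$, giving every node when $n$ is odd and exactly the range $k\le 2[n/4]+1$ or $k\ge n-1$ when $n$ is even, matching Step 3. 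Combining Steps 1--4 proves the theorem.
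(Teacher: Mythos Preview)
Your Steps 1, 2 and 4 are sound and match the paper's framework: reduce to the finite quotients $\bar G_m=W\ltimes(X/mX)$, run Clifford theory over the abelian normal subgroup, and separately classify when $(W,Q)$ is a Gelfand pair. The ``otherwise'' direction and Step 4 are essentially as in the paper (the $D_n$ case being handled in the Appendix by exactly the kind of direct analysis you sketch).

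The gap is Step 3. Your reduction to ``$\CC[Y_\chi]$ is multiplicity-free for every $\chi$'' is correct, but the proposed verification by ``descending induction along the list of reflection subgroups $S_\chi$'' is not a proof: the assertion that each orbit stabilizer $S_\chi\cap wQw^{-1}$ is again of ``normalizer of a maximal parabolic'' type in $S_\chi$ is unsubstantiated (and in general false), and you give no mechanism to control collisions between different $S_\chi$-orbits. What actually makes the argument go through is a fact you never invoke: the coefficient of the removed fundamental root $\alpha$ in the highest root $\tilde\alpha$ is at most $2$ (at most $3$ in the sole case of the short root of $G_2$). Writing $X=Y\times\langle x\rangle$ with $Y=X_0$ and $x=t(\alpha^\vee)$, this bound forces: for any $w\in W\setminus Q$ there is $y\in Y$ with $w\cdot y\in x^{\pm1}Y\cup x^{\pm2}Y$. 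Consequently, for the characters $\lambda_t$ of $X/hX$ trivial on $Y$ with $\lambda_t(x)=\varepsilon^t$, one has $\Stab_W(\lambda_t)\subseteq Q$ whenever $t\ne h,h/2$, and in fact $\Stab_W(\lambda_t)=P$. In your language this collapses the set $Y_{\lambda_t}$ to a single point with stabilizer $P$, so $\CC[Y_{\lambda_t}]$ is one-dimensional and trivially multiplicity-free; equivalently, the paper shows $(\eta^t)^G$ is already irreducible. The remaining cases $t=h$ and $t=h/2$ are handled directly by the hypothesis that $(W,Q)$ is Gelfand, via $(\varphi^G)\!\downarrow_W=1_Q^W$. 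No induction on $S_\chi$ is needed or attempted.

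A related misconception: you locate the $D_n$ (even $n$) restriction in Step 3, as arising from collisions in $Y_\chi$. In fact the implication ``$(W,Q)$ Gelfand $\Rightarrow$ $(G,K)$ proto-Gelfand'' is proved uniformly once the coefficient bound above is in hand; the restriction $k\le 2[n/4]+1$ or $k\ge n-1$ for even $n$ lives entirely in Step 4, i.e.\ it is exactly the condition for $(W,Q)$ to be a Gelfand pair in type $D_n$.
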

	The converse part of the theorem is easy. If $K$ is a proto-Gelfand subgroup of $G$ then so its overgroup $KX$, which is of finite index in $G$ is a Gelfand subgroup of $G$. But $KX$ contains the normal subgroup $X$, so its being a Gelfand subgroup of $G$ is equivalent to $Q\cong KX/X$ being a Gelfand subgroup of $W\cong G/X$.
	
	If $P$ is not a Gelfand subgroup of $W$ then, by a similar argument, $H$ cannot be a proto-Gelfand subgroup of $G$. However, even if $P$ is Gelfand in $W$ but $P<Q$ then $H$ is still not a proto-Gelfand subgroup of $G$, that is why $K$ is needed in the theorem instead of the reflection subgroup $H$. See Remark~\ref{rem:nonGelfand}, below.
	
	Our proof in \cite{AHR} was based on the so called ``Gelfand's trick:'' if $G$ possesses an involutive anti-automorphism (typically the group inversion) that fixes every $H-H$ double coset then $(G,H)$ is a (proto-)Gelfand pair. In the present paper  
	we use character theory for two reasons. First, that it enables a unified approach of all affine Weyl groups. Second, that in some cases Gelfand's trick does not suffice. One such is the type $\tA_n$. The following is a special case of our main theorem, removing the $k$-th node from the Dynkin diagram of $A_n$.
	\begin{corollary}\label{th:A_n}Let $G$ be a Weyl group of type $\tA_n$. It contains naturally a reflection subgroup $H_k$ of type $\tA_{k-1}\times\tA_{n-k}$. If $2k\ne n+1$ then $H_k$ is a proto-Gelfand subgroup of $G$. If $2k=n+1$ then $H_k$ has a double cover $K_k$ which is a proto-Gelfand subgroup of $G$.
	\end{corollary}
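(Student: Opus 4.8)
The plan is to specialise Theorem~\ref{main} to $\Phi=A_n$ and then keep book of the groups that arise. Here $W=\S_{n+1}$ with simple reflections $s_1,\dots,s_n$, and removing the $k$-th node leaves $\Phi_0$ of type $A_{k-1}\times A_{n-k}$, so that $P=H\cap W$ is the Young subgroup $\S_{\{1,\dots,k\}}\times\S_{\{k+1,\dots,n+1\}}\cong\S_k\times\S_{n+1-k}$ and $H=H_k$ is the affine Weyl group of $\Phi_0$, of type $\tA_{k-1}\times\tA_{n-k}$, sitting inside $G=W\ltimes X$. First I would compute $Q=N_W(P)$: a Young subgroup of $\S_{n+1}$ is self-normalising unless it has two blocks of the same size, so if $2k\ne n+1$ then $Q=P$, while if $2k=n+1$ then $Q=(\S_k\times\S_k)\rtimes\langle\tau\rangle$, where $\tau$ is the involution interchanging the two blocks, and in particular $[Q:P]=2$. (The degenerate cases in which some factor of $\Phi_0$ is empty, such as $n=1$, can be set aside and handled trivially.)

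The only ingredient needed beyond Theorem~\ref{main} is the classical fact that $(\S_{n+1},\,\S_k\times\S_{n+1-k})$ is a Gelfand pair: the permutation module $\CC[\S_{n+1}/P]$ on the $k$-element subsets of an $(n+1)$-element set decomposes as $\bigoplus_{j=0}^{\min(k,n+1-k)}S^{(n+1-j,j)}$, with pairwise non-isomorphic irreducible constituents. Thus $P$ is a Gelfand subgroup of $W$ for \emph{every} $k$, and Theorem~\ref{main} gives at once that $K=QH$ is a proto-Gelfand subgroup of $G$ for every $k$. It then only remains to express $K$ in terms of $H_k$. If $2k\ne n+1$, then $Q=P\le H$, so $K=QH=H_k$, and we conclude directly that $H_k$ is a proto-Gelfand subgroup of $G$. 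If $2k=n+1$, then, since $Q\le W$, we have $Q\cap H=Q\cap(H\cap W)=Q\cap P=P$, whence $[K:H]=[Q:P]=2$; moreover $\tau$, regarded as the element of $\S_{2k}$ that exchanges $\{1,\dots,k\}$ with $\{k+1,\dots,2k\}$, induces the diagram automorphism of $\Phi_0=A_{k-1}\times A_{k-1}$ that swaps the two factors, so it permutes the affine $\Phi_0$-hyperplanes and hence normalises $H$. Therefore $H$ is normal of index $2$ in $K=K_k=QH_k$, which is the asserted index-two overgroup (``double cover'') of $H_k$, and it is proto-Gelfand in $G$. In this case $H_k$ by itself is \emph{not} a proto-Gelfand subgroup of $G$, since $P<Q$ strictly (this is the ``Otherwise it is not'' clause of Theorem~\ref{main}; cf.\ Remark~\ref{rem:nonGelfand}), which is exactly why one must pass to $K_k$.

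Since Theorem~\ref{main} carries all the weight, I do not expect a serious obstacle. The only places that need any care are the normaliser computation for the Young subgroup --- in particular, confirming that the equal-block case contributes exactly one extra symmetry of order two and nothing further --- and the verification that $\tau$ acts on $\Phi_0$ as a genuine diagram automorphism, which is what makes $K_k$ an actual overgroup of $H_k$ inside $G$ rather than merely an abstract extension of $H_k$ by $\ZZ_2$.
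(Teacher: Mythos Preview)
Your proof is correct and follows exactly the paper's approach: Corollary~\ref{th:A_n} is stated without separate proof, as an immediate specialisation of Theorem~\ref{main} to $\Phi=A_n$, and you have supplied precisely the bookkeeping (identifying $P$, $Q$, and $K=QH$) that this specialisation requires. One small quibble: the failure of $H_k$ itself to be proto-Gelfand when $2k=n+1$ is not the ``Otherwise'' clause of Theorem~\ref{main} (that clause is about $K$, not $H$) but rather the content of Remark~\ref{rem:nonGelfand}, which you do also cite --- and in any case this assertion is not part of the corollary's statement.
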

	It is possible to show that the double coset algebra, the Hecke algebra, $H(G,H_k)$ (or $H(G,K_k)$ for $2k=n+1$) is commutative, so it is for every finite quotient. Still, the simplest Gelfand's trick does not work, not even for $2k\ne n+1$, the double coset $H_kxH_k=xH_k\ne H_k$ (with $x\in X$)  does not contain involutions. While $H_k$ is not a parabolic subgroup, this phenomenon also indicates why in the theorem of Curtis, Iwahori and Kilmoyer (see Theorem~\ref{thm:CIK}, below) finiteness is required.
	
	The study of the Gelfand property in Coxeter groups goes back to at least half a century. To our knowledge 
	\cite{CIK} and \cite{saxl} are among the first general results. 
	Saxl \cite{saxl} gave a list of potential candidates of Gelfand subgroups $G\leq S_n$ for $n>18$. His list was later made exact by Godsil and Meagher \cite{GM} where they dealt with $n\leq 18$, too.
	
	Curtis, Iwahori and Kilmoyer considered parabolic subgroups of finite Coxeter groups. The following is abridged from \cite[Theorem~3.1]{CIK}.
	\begin{theorem}\label{thm:CIK}
		Let $(W,R)$ be a Coxeter system with $W$ finite, $J\subseteq R$ and $W_J=\langle s_j|j\in J\rangle$ the corresponding parabolic subgroup. The Hecke algebra (double coset algebra) $H(W,W_J)$ over an algebraically closed field of characteristic $0$ is commutative if and only if the shortest element of each double coset $W_J w W_J\subseteq W$ is an involution.
	\end{theorem}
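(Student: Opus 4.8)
The plan is to realise $H(W,W_J)$ inside the group algebra $kW$ and to play the \emph{inversion anti-automorphism} off against the semisimplicity that commutativity would force. Put $e=\tfrac1{|W_J|}\sum_{u\in W_J}u$, so that $kWe\cong\mathrm{Ind}_{W_J}^{W}\mathbf 1$ and $H(W,W_J)$ is (anti-)isomorphic to the corner algebra $ekWe$; since commutativity is unaffected by passing to the opposite algebra, it suffices to decide when $ekWe$ is commutative. A $k$-basis of $ekWe$ is given by the double-coset class sums $A_d=\sum_{x\in W_JdW_J}x$, where $d$ runs over the $(J,J)$-reduced elements, i.e.\ the minimal-length representatives of the double cosets, so that the shortest element of $W_JwW_J$ is precisely this $d$. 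Let $\iota\colon kW\to kW$ be the $k$-linear anti-automorphism $w\mapsto w^{-1}$. As $W_J^{-1}=W_J$ we have $\iota(e)=e$, so $\iota$ restricts to a $k$-linear anti-automorphism of $ekWe$, with $\iota(A_d)=A_{d^{-1}}$ because inversion carries $W_JdW_J$ bijectively onto $W_Jd^{-1}W_J$ (and $d^{-1}$ is again $(J,J)$-reduced). In these terms the hypothesis ``the shortest element of every double coset is an involution'' says exactly that $d=d^{-1}$ for every such $d$, i.e.\ that $\iota$ fixes every basis element $A_d$.

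For the ``if'' direction (Gelfand's trick), if $d=d^{-1}$ for all reduced representatives then $\iota$ fixes a basis of $ekWe$, hence $\iota=\mathrm{id}$ on $ekWe$; being simultaneously the identity map and an anti-automorphism, it satisfies $ab=\iota(ab)=\iota(b)\iota(a)=ba$ for all $a,b$, so $ekWe$ — and therefore $H(W,W_J)$ — is commutative.

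For the ``only if'' direction, suppose $H(W,W_J)$ is commutative. Then $ekWe$ is a commutative semisimple $k$-algebra, so $ekWe\cong k^{n}$ with $n=\dim(ekWe)$, and the anti-automorphism $\iota$ is in fact an automorphism of it. I claim $\iota$ fixes every primitive idempotent. Commutativity forces $\mathrm{Ind}_{W_J}^{W}\mathbf 1$ to be multiplicity-free, so the elements $ez_ie$ — with $z_i$ the central (block) idempotent of $kW$ attached to the irreducible module $V_i$ — are orthogonal idempotents summing to $e$, of which exactly $n$ are nonzero; hence the nonzero $ez_ie$ are precisely the primitive idempotents of $ekWe$. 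Now $\iota(z_i)=z_{i^{*}}$, where $V_{i^{*}}$ is the contragredient of $V_i$ (immediate from the standard formula $z_i=\tfrac{\dim V_i}{|W|}\sum_w\overline{\chi_i(w)}\,w$); and here the special structure of $W$ enters, namely that a finite Coxeter group is \emph{ambivalent} — every element is conjugate to its inverse, equivalently every irreducible character is real-valued — so $V_i^{*}\cong V_i$ and $\iota(z_i)=z_i$. Therefore $\iota(ez_ie)=e\,\iota(z_i)\,e=ez_ie$, proving the claim. A $k$-linear map of $k^{n}$ fixing all primitive idempotents is the identity, so $\iota=\mathrm{id}$ on $ekWe$; comparing with $\iota(A_d)=A_{d^{-1}}$ and using that distinct double cosets have disjoint support in $kW$, we get $A_d=A_{d^{-1}}$, hence $d=d^{-1}$, for every reduced representative $d$. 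That is exactly the assertion that the shortest element of each double coset is an involution.

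I expect the only genuinely non-formal ingredient to be the ambivalence of finite Coxeter groups; this is classical (for Weyl groups the irreducible characters are even rational-valued, and the types $I_2(m)$, $H_3$, $H_4$ are checked directly), and it is precisely what distinguishes the Coxeter case — for a general finite group, commutativity of the Hecke algebra does \emph{not} force the inversion to fix every double coset. The minor technical point needing care is the identification of the primitive idempotents of $ekWe$ with the nonzero $ez_ie$, together with the standard equivalence ``$H(W,W_J)$ commutative $\iff\mathrm{Ind}_{W_J}^{W}\mathbf 1$ multiplicity-free''; beyond that the argument is pure formalism with the idempotent $e$ and the map $\iota$.
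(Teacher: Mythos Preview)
The paper does not supply its own proof of this statement: Theorem~\ref{thm:CIK} is quoted verbatim from \cite{CIK} as background, with no argument given. So there is nothing in the paper to compare your proposal against.

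That said, your argument is correct and is essentially the standard one. The ``if'' direction is precisely Gelfand's trick applied to the inversion anti-automorphism. For the ``only if'' direction you correctly identify the decisive input: finite Coxeter groups are ambivalent, so every central idempotent $z_i$ of $kW$ is fixed by $\iota$, whence $\iota$ fixes each primitive idempotent $ez_ie$ of the commutative semisimple algebra $ekWe$; since those idempotents span $ekWe$, $\iota$ is the identity there, forcing $A_d=A_{d^{-1}}$ and hence $d=d^{-1}$ by uniqueness of the minimal-length double-coset representative. The only places that deserve a word of justification are (i) that each $W_JwW_J$ has a \emph{unique} element of minimal length (standard for Coxeter systems), so $A_d=A_{d^{-1}}$ really gives $d=d^{-1}$, and (ii) that the nonzero $ez_ie$ are exactly the primitive idempotents of $ekWe$ --- this follows from multiplicity-freeness of $1_{W_J}^W$, which is equivalent to commutativity of $ekWe$, as you note. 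Your closing remark is on point: ambivalence is exactly what separates the Coxeter case from the general finite-group situation, where commutativity of the Hecke algebra does not force symmetry of the double cosets.
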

	
	Complete classification of commutative Hecke algebras of Coxeter groups over parabolic subgroups is given by Abramenko, Parkinson and Van Maldeghen\cite{APVM}. Let $X_{n,i}$ denote the case of the Coxeter system $(W,S)$ of type $X_n$ and $I=S\setminus\{i\}$, removing node $i$ from $S$ according to the standard numbering of the nodes. Similarly $\tX_{ n , i}$ denotes the case when $I=S\setminus\{i\}$ for the affine Weyl group of type $\tX_n$.
	\begin{theorem}\cite[Theorem~2.1]{APVM}\label{thm:APVM}
		Let $( W , S )$ be an irreducible Coxeter system,  $I \subseteq S$ be spherical (that is, $W_I$ is finite), and let specialization $\mathbf{\tau} = ( \tau_s )$ with $\tau_s\geq 1$ for each $s\in S$. The
		$I$-parabolic Hecke algebra $\mathcal{H}^I$ and its specialization $\mathcal{H}^I_\tau$ are noncommutative if $| S \setminus I | >1$. If $I = S \setminus\{ i \}$ then $\mathcal{H}^I$ and $\mathcal{H}^I_\tau$  are
		commutative in the cases
		\begin{itemize}
			\item $A_{n , i}\, ( 1 \leq i \leq n )$ , $B_{n , i}\, ( 1 \leq i \leq n )$ , $D_{n , i}\, ( 1 \leq i \leq n/2\text{ or } i= n - 1 , n)$, $E_{6 , 1}$, $E_{6 , 2}$, $E_{6 , 6}$, $E_{7 , 1}$, $E_{7 , 2}$, $E_{7 , 7}$, $E_{8 , 1}$, $E_{8 , 8}$, $F_{4 , 1}$, $F_{4 , 4}$, $H_{3 , 1}$, $H_{3 , 3}$, $H_{4 , 1}$, $I_{2}(p)_{i} (i=1,2)$, and\\
			\item all affine cases $\tX_{ n , i}$ with $i$ a special type (that is, the removed node and the extra node of the Coxeter diagram are in the same orbit under graph automorphisms),
		\end{itemize}
		and noncommutative otherwise.
	\end{theorem}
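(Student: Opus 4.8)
The plan is to isolate a combinatorial criterion for the \emph{finite} types, handle multiple node removals uniformly, and then treat the affine types by a separate mechanism. First I would establish, generalizing Theorem~\ref{thm:CIK}, that for a \emph{finite} Coxeter system $(W,S)$ and spherical $I\subseteq S$ both the generic $\mathcal{H}^I$ and every specialization $\mathcal{H}^I_\tau$ with $\tau_s\geq1$ are commutative if and only if every minimal-length $(W_I,W_I)$-double-coset representative is an involution. Sufficiency is structural and parameter-free: the Iwahori--Hecke algebra carries the anti-automorphism $\ast\colon T_w\mapsto T_{w^{-1}}$, which fixes $\mathbf{1}_I$ (as $W_I=W_I^{-1}$) and restricts to an anti-automorphism of $\mathcal{H}^I$ sending the distinguished basis element for $d$ to that for $d^{-1}$; if every minimal $d$ is an involution then $\ast$ fixes the basis and $ab=(ab)^\ast=b^\ast a^\ast=ba$. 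For necessity I would start from Theorem~\ref{thm:CIK} at equal parameters, where a non-involutive representative already yields noncommutativity, and extend to all $\tau_s\geq1$ and to generic parameters by a Tits-deformation argument using the positivity of the structure constants.

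With this criterion, the case $|S\setminus I|>1$ is uniform: connectedness of the diagram lets me pick two removed nodes at minimal distance, whose joining path has interior in $I$, and the associated reduced word is a minimal double-coset representative $d$ with $d\neq d^{-1}$ that cannot be shortened into $W_I$ on either side. A direct computation of the commutator of $\mathbf{1}_d$ with a suitable companion basis element then gives noncommutativity in every type (finite via the necessity direction above, affine via the Bernstein computation of the next paragraph), reducing the problem to the maximal case $I=S\setminus\{i\}$.

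For finite $I=S\setminus\{i\}$ the task is to check type by type whether all minimal representatives are involutions. In type $A_n$ with $W_I=\S_i\times\S_{n+1-i}$ the double cosets are the two-row contingency tables of the Johnson scheme and every minimal representative is an involution, so $\mathcal{H}^I$ is commutative for all $i$; the signed-permutation models settle $B_n,C_n$ for all $i$ in the same way. The delicate case is $D_n$, where the even-signedness constraint is precisely what can force a minimal representative not to be an involution, and tracking this yields the stated range $i\leq n/2$ or $i\in\{n-1,n\}$ with its parity dependence. The types $E_6,E_7,E_8,F_4,H_3,H_4,I_2(p)$ are finite verifications on their short lists of distinguished representatives.

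The affine series $\tX_{n,i}$ needs a different argument, because---as the remark after Corollary~\ref{th:A_n} shows---its double cosets contain translations and hence no involutions, so the finite criterion does not apply. Instead I would use the Bernstein presentation together with the automorphisms of the affine diagram. When $i$ is special, a diagram automorphism identifies $S\setminus\{i\}$ with the finite diagram, so $W_I$ is conjugate to the full finite Weyl group $W$ and $\mathbf{1}_I\mathcal{H}\mathbf{1}_I$ is the spherical subalgebra, commutative by the Satake/Bernstein isomorphism with $\mathbb{C}[X]^W$; this is what explains commutativity despite the absence of involutive double cosets. When $i$ is not special, $W_I$ is a proper finite parabolic and the Bernstein generators exhibit noncommuting elements. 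I expect the two principal obstacles to be the $D_n$ analysis---the only place the answer deviates from ``every node works,'' demanding precise control of minimal representatives among even signed permutations---and the parameter-uniformity of the necessity direction, where one must ensure the noncommutativity visible at equal parameters survives for all $\tau_s\geq1$ rather than only generically.
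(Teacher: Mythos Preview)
This theorem is not proved in the present paper at all: it is quoted verbatim from \cite{APVM} as an external input and used only as a black box (to supply the list of Gelfand parabolic subgroups in the proof of Theorem~\ref{main} and in the Appendix). There is therefore no argument here against which to compare your proposal. For what it is worth, your sketch does follow the broad architecture of the original proof in \cite{APVM}---the extension of the Curtis--Iwahori--Kilmoyer involution criterion to generic and unequal parameters, the uniform treatment of $|S\setminus I|>1$, the case analysis for maximal spherical $I$ in finite type, and the Bernstein/Satake identification of $\mathbf{1}_I\mathcal{H}\mathbf{1}_I$ with the $W$-invariants of the lattice algebra when $i$ is a special node in the affine case---though the two places you yourself single out (the $D_n$ range and the parameter-uniform necessity direction) are exactly where \cite{APVM} has to do substantial work, and your handling of non-special affine nodes (``the Bernstein generators exhibit noncommuting elements'') would need to be made concrete.
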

	We will use the list of this theorem for exceptional Weyl groups in the following reference form. The parabolic subgroup is a Gelfand subgroup if and only if the deleted node is an endnode (``leaf'') of the Dynkin diagram but not the middle leaf of $E_8$. See Figure~\ref{fig:rootsys} for the labelling of the diagrams. 
	
	We finish the Introduction by a natural question. Beyond parabolic subgroups one may 
	consider all reflection subgroups. 
	Dyer and Lehrer gave a complete description of reflection subgroups of finite Coxeter groups and affine Weyl groups\cite{DL}. Our subgroup $H$ is always such. The following is a slight modification of Question~6.2 of \cite{AHR}.
	\begin{question}
		Which reflection subgroups of affine Weyl groups (or finite covers of them) are proto-Gelfand?
	\end{question}
	\noindent
	{\bf Acknowledgement.} This paper is an outgrowth from a joint work with Ron Adin and Yuval Roichman\cite{AHR}, but uses quite different methods. Beyond friendship and hospitality, they helped in the development of the present work in several ways both in structure and in content. The author wishes to express gratitude towards them.
	
	\section{Gelfand pairs for affine Weyl groups}

	We prove first a general result and then show that its assumptions hold for the affine Weyl groups. These assumptions are collected in the following hypothesis about a group $G$. Note that the definition of $Q$ in (e) is different from the definition of $Q$ in Theorem~\ref{main}. That they are the same will become clear only in the proof of the theorem. When we establish property (c) we also prove $N_W(Y)=N_W(P)$, see there. For the proof of Proposition~\ref{prop:main} the version below is more convenient.
	
	\begin{hypothesis}\label{hypo}
		Let $G=W\ltimes X$ be a semidirect product of a finite group $W$ and a free Abelian group $X$.
		Further, let $H\leq G$ be a subgroup for which the following (a), (b), (c), (d), (e) or (a), (b), (c'), (d'), (e')
		hold.
		\begin{enumerate}
			\item $X\cap H=Y$ and $X=Y\times\langle x\rangle$ for some $x\in X$, which we fix;
			\item $H\triangleleft L=H\langle x\rangle$;
			\item $|N_G(Y):L|\leq 2$;
			\item if $g\in G\setminus N_G(Y)$ then there exist $y,z\in Y$ such that $y^g=x^m z$ with $m\in\{\pm1,\pm2\}$;
			\item $Q=N_W(Y)$ is a Gelfand subgroup of $W$.
			\item[(c')] $|N_G(Y):L|=2$;
			\item[(d')] if $g\in G\setminus N_G(Y)$ then there exist $y,z\in Y$ such that $y^g=x^m z$ with $m\in\{\pm1,\pm2,\pm3\}$;
			\item[(e')] $P=H\cap W$ is a Gelfand subgroup of $W$.
		\end{enumerate}
	\end{hypothesis}
	
	\begin{figure*}[h]
		\centering
		
		\begin{tikzpicture}[scale=.6]
		\node (G) at (1,4) {$G$};
		\node (W) at (-3,2) {$W$};
		\node (KL) at (2,2) {$KL$};
		\node (K) at (0,1) {$K$};
		\node (L) at (3,0) {$L$};
		\node (K0) at (-2,0) {$Q$};
		\node (X) at (4,-2) {$X$};
		\node (H) at (1,-1) {$H$};
		\node (X1) at (2,-3) {$Y$};
		\node (H0) at (-1,-2) {$P$};
		\node (one) at (0,-4) {$1$};
		\draw (G) -- (W) -- (K0) -- (H0) -- (one) -- (X1) -- (X) -- (L) -- (KL) -- (G);
		\draw (KL) -- (K) -- (K0);
		\draw (L) -- (H) -- (H0);
		\draw (K) -- (H) -- (X1);
		\end{tikzpicture}
		\caption{Part of the subgroup lattice of $G$}
		\label{fig:subgrouplattice}
	\end{figure*}
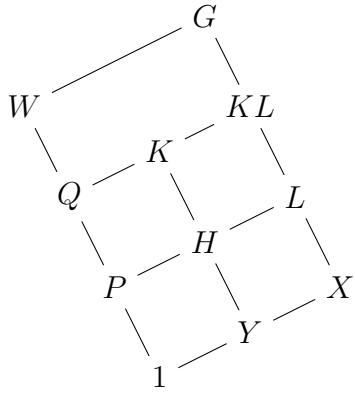
	\begin{remark}\label{prop:g2}The (c'), (d'), (e') version is needed in the sole case when $\Phi$ is a root system of type $G_2$ when we remove the short fundamental root. Here (d) does not hold so we somewhat relax it and strengthen (c) and (e).
		
		After the proof of the main case of Proposition~\ref{prop:main} we briefly cover the necessary changes.
	\end{remark}
	\begin{proposition}\label{prop:main}
		With the assumptions of Hypothesis~\ref{hypo}, $K=N_W(Y)Y$
		is a proto-Gelfand subgroup of $G$.
	\end{proposition}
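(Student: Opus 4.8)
The plan is to reduce the statement to a character inequality over the finite quotients of $G$, to establish that inequality by Clifford theory relative to the normal abelian subgroup $X$, and then to discard almost all double cosets using hypothesis~(d), closing the surviving one(s) with hypothesis~(e). I expect the one genuinely delicate point to be the case in which the relevant character of $X$ takes the value $-1$ on $x$.

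First I would observe that $(G,K)$ is a proto-Gelfand pair if and only if $(G/N,KN/N)$ is a Gelfand pair for every normal subgroup $N$ of finite index in $G$, and that, since $KN\supseteq K$, the $KN$-fixed subspace of any $\CC G$-module is contained in its $K$-fixed subspace. Hence it suffices to prove $\langle\chi|_K,\mathbf 1\rangle=\dim V_\chi^{K}\le 1$ for every irreducible character $\chi$ of $G$ with finite image.

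Fix such a $\chi$, and let $\lambda$ be a constituent of $\chi|_X$, a character of finite order. Since $X$ is abelian and normal with $G/X\cong W$ finite, and since the inertia subgroup $M:=\mathrm{Stab}_G(\lambda)=W_\lambda\ltimes X$ (with $W_\lambda:=\mathrm{Stab}_W(\lambda)$) is again a split extension, Clifford theory gives $\chi=\mathrm{Ind}_M^G\rho$ with $\rho=\hat\lambda\otimes\mathrm{Infl}\,\psi$, where $\hat\lambda$ extends $\lambda$ to $M$ trivially on $W_\lambda$ and $\psi\in\mathrm{Irr}(W_\lambda)$. Applying Mackey's formula, and using that $X\le M$ while $K\cap X=Y$ (because $Q\cap X=1$), one checks that every $K$--$M$ double coset has a representative in $W$, that $w\mapsto KwM$ induces a bijection $Q\backslash W/W_\lambda\xrightarrow{\ \sim\ }K\backslash G/M$, that $K\cap{}^wM=(Q\cap{}^wW_\lambda)\,Y$, and that $Y$ acts on ${}^w\rho$ through the linear character $y\mapsto\lambda(y^w)$. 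Thus the term indexed by $w$ vanishes unless $\lambda(Y^w)=1$, and otherwise equals $\langle({}^w\psi)|_{Q\cap{}^wW_\lambda},\mathbf 1\rangle$, so that
\[
\langle\chi|_K,\mathbf 1\rangle=\sum_{w\in Q\backslash W/W_\lambda,\ \lambda(Y^w)=1}\langle({}^w\psi)|_{Q\cap{}^wW_\lambda},\ \mathbf 1\rangle .
\]

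It remains to bound this sum by $1$. After replacing $\lambda$ by a $W$-conjugate I may assume $\lambda|_Y=\mathbf 1$ (if no conjugate of $\lambda$ kills $Y$, the sum is empty). If $\lambda=\mathbf 1$ then $W_\lambda=W$ and the single term is $\langle\psi|_Q,\mathbf 1\rangle\le1$ by~(e). If $\lambda\ne\mathbf 1$, let $d$ be the order of $\lambda(x)$. When $d\ge3$, hypothesis~(d) forces every surviving $w$, and in fact every element of $W_\lambda$, to normalize $Y$: for $w\notin Q$ it yields $y_0,z\in Y$ with $y_0^w=x^mz$, $|m|\le2$, and then $\lambda(Y^w)=1$ (respectively $w\in W_\lambda$) would force $\lambda(x)^m=1$, which is impossible; so $W_\lambda\le Q$ and the only term is $\langle\psi|_{W_\lambda},\mathbf 1\rangle=\langle\psi,\mathbf 1\rangle\le1$. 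The case $d=2$, i.e.\ $\lambda(x)=-1$, is the hard one, since $m=\pm2$ no longer excludes double cosets outside $Q$; here I would use hypothesis~(c), that $L=HX$ has index at most $2$ in $N_G(Y)=QX$, to show that at most one double coset outside $Q$ contributes and that the sum of its term with the $Q$-term is $\langle\Phi|_Q,\mathbf 1\rangle$ for a single $\Phi\in\mathrm{Irr}(W)$, hence $\le1$ by~(e), presumably after a further split according to whether $Q$ fixes $\lambda$. Finally, the variant~(c$'$)--(e$'$) of the hypothesis, needed by Remark~\ref{prop:g2} only for $G_2$ with the short simple root deleted, is treated in the same pattern: $d\ge4$ becomes the easy range, and $d\in\{2,3\}$ are handled by the analogous pairing argument, now using that the index is exactly $2$ and that $P$ is a Gelfand subgroup of $W$.
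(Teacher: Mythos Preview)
Your overall route—fixing an irreducible $\chi$ with finite image, writing $\chi=\mathrm{Ind}_M^G(\hat\lambda\otimes\psi)$ by Clifford theory, and computing $\langle\chi|_K,\mathbf 1\rangle$ via Mackey over $Q\backslash W/W_\lambda$—is correct and genuinely different from the paper's, which instead passes to a finite quotient $G/X^{(h)}$, decomposes $1_K^{KL}$ using the cyclic/dihedral structure of $KL/H$ supplied by (b) and (c), and induces each piece to $G$. Your reduction step and the cases $\lambda=\mathbf 1$ and $d\ge 3$ are handled correctly.

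The gap is your $d=2$ sketch, which misreads the situation. With $\lambda|_Y=\mathbf 1$ and $\lambda(x)=-1$, the character $\lambda$ is the \emph{unique} order-$2$ character of $X$ trivial on $Y$; hence ${}^w\lambda|_Y=\mathbf 1$ already forces ${}^w\lambda=\lambda$, so every surviving $w$ lies in $W_\lambda$. Since each $q\in Q=N_W(Y)$ sends $x$ into $x^{\pm1}Y$ and therefore fixes $\lambda$, we have $Q\le W_\lambda$, and there is exactly one surviving double coset, represented by $w=1$: no ``double coset outside $Q$'' ever appears, and the lone term is $\langle\psi|_Q,\mathbf 1\rangle$ with $\psi\in\Irr(W_\lambda)$, not $\Irr(W)$. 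What you actually need is that $(W_\lambda,Q)$ is a Gelfand pair. This follows from (e), but by transitivity of induction rather than your proposed pairing: $1_Q^W=\mathrm{Ind}_{W_\lambda}^W(1_Q^{W_\lambda})$ is multiplicity-free by (e), and if some $\psi$ occurred in $1_Q^{W_\lambda}$ with multiplicity $\ge 2$ then any irreducible constituent of $\psi^W$ would occur in $1_Q^W$ with multiplicity $\ge 2$. Note that hypothesis (c) is nowhere used in this corrected argument. The variant under (c$'$),(d$'$),(e$'$) is analogous once you observe that for $d=3$ a surviving $w$ satisfies ${}^w\lambda\in\{\lambda,\lambda^{-1}\}$; using that $Q\setminus P$ inverts $X/Y$ (equivalently $K\not\trianglelefteq KL$, which is exactly the case the paper reduces to at the start of its proof), the surviving $w$ again fill a single double coset $QW_\lambda$ with $Q\cap W_\lambda=P$, and the term $\langle\psi|_P,\mathbf 1\rangle\le 1$ by (e$'$) and the same induction-in-stages argument.
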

	\begin{proof}
		Observe, that $N_G(Y)=KX=KL\triangleright H$ with $L/H\triangleleft KL/H$ cyclic. If $K/H$ is also normal in $L/H$ then 
		all the assumptions hold for $K$ in place of $H$. So, in the following we assume that either $K=H$ or $K>H$ and not normal in $L$.
		
		It also follows from the above properties 
		that $N_{G}(Y)$ acts on the infinite cyclic group $X/Y$ so for any $g\in N_{G}(Y)$ we have $x^g\in xY$ or $x^g\in x^{-1}Y$. Note also that, by (b), $H$ trivially acts on $X/Y$.
		
		We need show that the permutation character is multiplicity-free in every finite quotient. Let $N\triangleleft G$ be any normal subgroup of finite index. It contains $N\cap X$, still of finite index in $X$. Given this finite index we can pick $h$ sufficiently large so that $N\cap X\supseteq X^{(h)}=\{y^h\mid y\in X\}$ which is still a normal subgroup of finite index in $G$. If $ZX^{(h)}/X^{(h)}$ is a Gelfand subgroup of $G/X^{(h)}$ then all the more so is $ZN/N$ a Gelfand subgroup of $G/N$. So it is enough to consider factors by the normal subgroups $X^{(h)}$. For ease of notation we use the same symbol for subgroups of the factor group as for subgroups of $G$. In other words, instead of being a free Ableian group, $X$ is assumed to be isomorphic to the homogeneous Abelian group $Z_h^n$. The rest of Hypothesis~\ref{hypo} remains intact.
		
		In particular, $L/H$ is cyclic or order $h$ and if $K>H$ then $KL/H$ is dihedral of order $2h$. As $1_K^{G}=(1_K^{KL})^G$, first we treat the irreducible constituents of $1_K^{KL}$ separately.
		
		{\bf Claim1:}
		Suppose $\varphi\in\Irr(KL)$ is a linear constituent of $1_K^{KL}$ that is $\varphi_{\downarrow K}=1_K$. 
		Then $\varphi^G$ is multiplicity free.
		
		Indeed, as $G=WKL$, we may use a special case of Mackey's theorem \cite[Problem~(5.2)]{Isaacs:1976} for the restriction of the induced character ${\varphi^G}_{\downarrow W}=(\varphi_{\downarrow K\cap W})^W=1_{K\cap W}^W$ which is multiplicity free by (e). Hence $\varphi^G$ itself is also multiplicity free. 
		
		Let $\varepsilon$ be a primitive $h$-th root of $1$ and $\eta$ a linear character of $L$ with kernel $H$ and $\eta(x)=\varepsilon$. 
		We consider $1_{H}^{L}=\sum_{s=1}^{h}\eta^s$, a sum of linear characters, each a power of $\eta$. For any integer $i$ and $g\in H$ we have $\eta^t(gx^i)=\varepsilon^{ti}$.	
		Let $\lambda_t=\eta^t_{\downarrow X}\in\Irr(X)$ for $t=1,\ldots,h$. 
		
		{\bf Claim2:} 
		If for some $g\in G$, $\lambda_t=\lambda_s^g$, where $t\ne h,h/2$ then $g\in KL$, so $s=t$ or $s=h-t$ and $K>H$.
		
		Indeed, if $g\notin KL=N_G(Y)$ then, by assumption (d), there exists $y,z\in Y$ such that $y^g=x^mz$ with $m\in\{\pm1,\pm2\}$. So $1=\lambda_s(y)=\lambda_s^g(x^m z)=\lambda_t(x^m)=\varepsilon^{mt}$, impossible.

		{\bf Claim3:}
		If $(\eta^t)^G\ne(\eta^s)^G$ then they share no common irreducible constituent. Of course, if $K>H$ and $s=h-t$ then $(\eta^s)^{KL}=(\eta^t)^{KL}$ so $(\eta^t)^G=(\eta^s)^G$. 
		
		Note that
		the irreducible constituents of $(\eta^t)^G$ lie above $\lambda_t$. So, by Clifford's Theorem\cite[(6.2)]{Isaacs:1976}, it is enough to prove that $\lambda_t$ and $\lambda_s$ are not $G$-conjugate, unless $s=t$ or $s=h-t$ and $K>H$. By {Claim2}, we have to check only $s,t\in\{h,h/2\}$. But then $s=t$ as $\Ker(\lambda_h)=G\ne \Ker(\lambda_{h/2})$.

		If $K=H$ then, by Claim1, all $(\eta^s)^G$ are multiplicity free and, by Claim3, the $(\eta^s)^G$ ($s=1,\ldots,h$) share no common irreducible constituent. Hence $1_H^G=\sum_{s=1}^h (\eta^s)^G$ is multiplicity free, indeed.
		
		So let $K>H$. If $t=h,h/2$ (the second only for even $h$) then 
		there are unique extensions $1_{KL}$ of $\eta^h=1_L$ and $\mu$ of $\eta^{h/2}$ (for $h$ even) to $KL$ that are trivial on $K$. So, using Claim1 again, we conclude that 
		$1_{KL}^G$ and $\mu^G$ are multiplicity free with all constituents lying above $\lambda_h=1_X$ and $\lambda_{h/2}$, respectively.
		
		Assume now $t\ne h,h/2$. Let $I_t=I_G(\lambda_t)=\{g\in G\mid \lambda_t^g=\lambda_t\}\geq L$ denote the inertia subgroup of $\lambda_t$. By Claim2, 
		$I_t\subseteq KL$. If $g\in K\setminus H$ then $\lambda_t^g=\lambda_{h-t}\ne\lambda_t$ so $I_t=L$ and $(\eta^s)^G$ is irreducible as it is induced from the inertia subgroup\cite[(6.11)]{Isaacs:1976}.
		
		Now $1_K^{KL}$ is a sum of irreducible, degree $2$ characters $\sum_{s=1}^{\lfloor \frac{h-1}{2}\rfloor}(\eta^s)^{KL}$, of $1_{KL}$ and if $h$ is even then of an additional linear character $\mu$ extending $\eta^{h/2}$. For $1\leq s<h/2$ $(\eta^s)^G$ are distinct irreducible while 	$\mu^G$ (for $h$ even) 
		and $1_{KL}^G$ are multiplicity free. By Claim3, there are no shared constituents among these, so $1_K^G$ is multiplicity free.
		\bigskip
		
		If together with (a) and (b), instead of (c), (d) and (e), the alternatives (c'), (d'), (e') hold then we make the following modifications. For Claim1: if $\varphi\in\Irr(L)$ is a linear constituent of $1_H^{L}$ then $\varphi^G$ is multiplicity free. The proof is the same, using (e'). For Claim2, the above proof still works, using (d'), unless $a=\pm3$ and $t=h/3,2h/3$. But even in that case $s=t$ or $s=h-t$. So, if for some $g\in G,\ \lambda_t=\lambda_s^g$ then $s=t$ or $s=h-t$.
		
		Claim3 holds, that is $(\eta^t)^G,(\eta^s)^G$ share no common irreducible summand unless $s=t,s=h-t$ but in these cases they are the same, as $K>H$ by (c'). The conclusion of the proof is the same: we use Claim1 (and (c')) to show that for $t\ne h,h/2$ $(\eta^t)^G=((\eta^t)^{KL})^G$ are all multiplicity free. Hence, by Claim3 as above, $1_K^G$ is multiplicity free.
	\end{proof}
	\begin{remark}\label{rem:nonGelfand}
		If $H<K$ then $(G,H)$ is not a proto-Gelfand pair. Indeed, for the natural map $\varphi:G\rightarrow W\ltimes \ZZ_3^n$ the image $\varphi(KL)/\varphi(H)$ is a dihedral group of order $6$ so $(\varphi(KL),\varphi(H))$ is not a Gelfand pair. The proposition claims that even if $H$ itself is not a proto-Gelfand subgroup, a double cover is.
	\end{remark}
	\begin{proofof}{Theorem~\ref{main}}
		To finish the proof we have to confirm the assumptions of Hypothesis~\ref{hypo} for the subgroup $H$ defined before the statement of the theorem and described below in more detail. 
		
		Let $\Phi$ be a root system of 
		rank $n$, $W$ be its Weyl group and $G$ be the corresponding affine Weyl group. Let $\Delta=\{\alpha_i\}_{i=1,\ldots,n}$ denote the fundamental roots and  $\Phi^\vee=\{\alpha^\vee=\frac{2\alpha}{(\alpha,\alpha)}\mid \alpha\in\Phi\}$ the corresponding coroot system. Then the normal subgroup of translations $X$ is naturally identified with $L(\Phi^\vee)$, the coroot lattice. We record some properties of the root systems in Figure~\ref{fig:rootsys}, see \cite[Sections~2.9, 2.10, 4.9]{Humphreys:1990}.
		\begin{figure*}[h]\caption{Coefficients of the highest root in irreducible root sytems}\label{fig:rootsys}
			\[\begin{array}{r
				|l|l}
			&\text{Dynkin diagram}&\text{coefficients of }\tilde{\alpha}\Bstrut\\
			\hline\TBstrut
			A_n
			&\dynkin[labels={1,2,n-1,n},label macro/.code={\alpha_{\drlap#1}},edgelength=.85cm]A{}&1,\ldots,1\\
			\hline\TBstrut
			B_n
			&\dynkin[labels={1,2,n-2,n-1,n},label macro/.code={\alpha_{\drlap#1}},edgelength=.85cm]B{}&1,2,\ldots,2\\
			\hline\TBstrut
			C_n
			&\dynkin[labels={1,2,n-2,n-1,n},label macro/.code={\alpha_{\drlap#1}},edgelength=.85cm]C{}&2,\ldots,2,1\\
			\hline\TBstrut
			D_n
			&\dynkin[labels={1,2,n-3,n-2,n-1,n},label macro/.code={\alpha_{\drlap#1}},edgelength=.85cm,label directions={,,,right,,}]D{}&2,\ldots,2,1,1\\
			\hline\TBstrut
			E_6
			&\dynkin[labels={1,2,3,4,5,6},label macro/.code={\alpha_{\drlap#1}},edgelength=.85cm,label directions={,,,,,}]E6&1,2,2,3,2,1\\
			\hline\TBstrut
			E_7
			&\dynkin[labels={1,2,3,4,5,6,7},label macro/.code={\alpha_{\drlap#1}},edgelength=.85cm,label directions={,,,,,,}]E7&2,2,3,4,3,2,1\\
			\hline\TBstrut
			E_8
			&\dynkin[labels={1,2,3,4,5,6,7,8},label macro/.code={\alpha_{\drlap#1}},edgelength=.85cm,label directions={,,,,,,,}]E8&2,3,4,6,5,4,3,2\\
			\hline\TBstrut
			F_4
			&\dynkin[labels={1,2,3,4},label macro/.code={\alpha_{\drlap#1}},edgelength=.85cm]F4&2,3,4,2\\
			\hline\TBstrut
			G_2
			&\dynkin[reverse arrows,labels={1,2},label macro/.code={\alpha_{\drlap#1}},edgelength=.85cm]G2&3,2\\
			\end{array} 
			\]
		\end{figure*}
		Pick $\alpha$ from among the fundamental roots. Let $\Phi_0$ be the (reducible) root system generated by the remaining fundamental roots $\Delta\setminus\{\alpha\}$. Finally, let $H$ denote the reflection subgroup of $G$ generated by the affine reflections corresponding to $\Phi_0$. Note that $P=W\cap H$ is the parabolic subgroup $W_{\Delta\setminus\{\alpha\}}\leq W$ generated by the fundamental reflections corresponding to $\Delta\setminus\{\alpha\}$.

		Of course, $H$ is the direct product of the affine reflection groups on the (usually two) connected components of $\Delta\setminus\{\alpha\}$.
		Namely, if $\Gamma=\{\alpha_i\mid i\in I\}$ is one component then $H_\Gamma=\langle g,s_i\mid i\in I
		\rangle$ is one direct factor of $H$, where $g=s_{\beta,1}$ is an affine reflection flipping $\beta$, the highest root in $\Phi_\Gamma$, the root system generated by $\Gamma$. 
		Let $Y=H\cap X$, it is naturally isomorphic to $L(\Phi_0^\vee)\leq L(\Phi^\vee)$ and let $x=t(\alpha^\vee)$ the translation by the coroot in the direction of the missing fundamental root, $\alpha$. Of course, $X=Y\times\langle x\rangle$. So Property (a) follows.
		
		If $s=s_{\alpha_i}\in H$ is among the ordinary reflections generating $H$ then \[x^{-1}sxs=t(-\alpha^\vee)s_{\alpha_i}t(\alpha^\vee)s_{\alpha_i}=t(-\alpha^\vee)t(s_{\alpha_i}\alpha^\vee)=t(-\alpha^\vee)t(\alpha^\vee-(\alpha^\vee,\alpha_i)\alpha_i^\vee)\in Y.
		\]
		As all for the generators of $H/Y$ commute with $xY$, we get that $H/Y$ and $X/Y$ commute.  
		In other words, $H\triangleleft L=H\langle x\rangle$. So Property (b) follows.

		Recall, that $P=W\cap H$. 
		Let $U$ denote the intersection of the reflecting hyperplanes corresponding to the $n-1$ fundamental roots in $\Delta\setminus\{\alpha\}$. So $U=\langle\Phi_0\rangle^\perp$ is a line. Let $M=\{w\in W\mid wU=U\}=\{w\in W\mid w\Phi_0\subseteq\langle\Phi_0\rangle\}=N_W(Y)$. So the elements of $M$ either reflect $U$ through the origin or fix $U$ pointwise. In this latter case 
		$w$ is in the isotropy group of $U$, which is $P$, by \cite[Theorem~1.12(d)]{Humphreys:1990}. Hence $|M:P|\leq 2$, in particular $M\leq N_W(P)$. Conversely, it is clear that $U$ is  $N_W(P)$-invariant, so $N_W(P)\leq M$ and hence $N_W(Y)=M=N_W(P)$ and property (c) follows.

		To prove property (d) we will use that the coefficient of the deleted fundamental root $\alpha$ in the highest root $\tilde{\alpha}$ is $1$ or $2$. For types $A,B,C$ and $D$ the highest root has coefficients $1,2$ only while for the exceptional types this holds for the endnodes, the ``leaves.'' (See our remark after Theorem~\ref{thm:APVM} in the Introduction.) There are two exceptions: the middle leaf of $E_8$ (for which the parabolic subgroup itself is not a Gelfand subgroup) and the short fundamental root of $G_2$ (whose coefficient in the highest root is $3$). For the middle leaf of $E_8$ the parabolic subgroup is self-normalizing so (e) does not hold. So the theorem does not hold, either. For the short root of $G_2$ (c'), (d') and (e') do hold, see below.
		
		Suppose that $w\in W$ does not normalize $L(\Phi_0^\vee)$, hence there is a root $\beta\in\Phi_0$ such that in the decomposition of the root $w\beta\in\Phi$ the coefficient of $\alpha$ is non-zero. As both $\tilde{\alpha}-w\beta$ and $w\beta+\tilde{\alpha}$ are non-negative combinations of the fundamental roots so the coefficient of $\alpha$ in $w\beta$ is between $-2$ and $2$. As it is also non-zero so property (d) follows.
		
		For classical types property (e) follows from Theorem~\ref{thm:APVM} save the case of $D_{n,i}$ $n/2<i<n-1$, which requires special attention. To obtain $Q$ in each of the exceptional cases $E_6,\,E_7,\,E_8,\,F_4$ we used GAP\cite{GAP}. We conclude that if the node is not a leaf then $Q$ is not a Gelfand subgroup.

		\begin{description}
			\item[$A_n,B_n,C_n$] By Theorem~\ref{thm:APVM}, $P$ is a Gelfand subgroup, so (e) holds. 
			
			\item[$D_n$]  By Theorem~\ref{thm:APVM}, if we remove $\alpha_k$ then $P$ is a Gelfand subgroup, so (e) holds, unless $n/2<k<n-1$. 
			However, $Q=N_W(P)$ is 
			a Gelfand subgroup of $W$ if and only if $n$ is odd or $n$ is even and $k\leq 2[n/4]+1$ or $k\geq n-1$. See Theorem~\ref{thm:Dn} in the Appendix for a proof of these. Hence (e) holds unless $n$ is even and $2[n/4]+1<k<n-1$.

			\item[$E_6$] If $\alpha$ is one of the three endnodes then $P$ is a Gelfand subgroup, by Theorem~\ref{thm:APVM}, so (e) holds. For the other three nodes even $Q$ is not a Gelfand subgroup. (Among these, $Q>P$ holds only for the middle node.)

			\item[$E_7$] If $\alpha$ is and endnode then $P$ is a Gelfand subgroup, by Theorem~\ref{thm:APVM}, so (e) holds. For the other four nodes even $Q$ is not a Gelfand subgroup. (Even though $Q>P$ always.) 
			\item[$E_8$] If $\alpha$ is one of the two farthest endnodes then $P$ is a Gelfand subgroup, by Theorem~\ref{thm:APVM}, so (e) holds. For the other six nodes even $Q$ is not a Gelfand subgroup. (Even though $Q>P$ always.)
			\item[$F_4$] If $\alpha$ is an endnode then $P$ is a Gelfand subgroup, by Theorem~\ref{thm:APVM}, so (e) holds. For the two middle nodes even $Q$ is not a Gelfand subgroup. (Even though $Q>P$ always.)
			\item[$G_2$] The natural action of the dihedral group on the vertices of a $12$-gon is multiplicity-free, so (e') holds. As $Q=N_W(P)$ is of order $4$, a double cover, (c') holds. The coefficients of 
			$\tilde{\alpha}$ are $2$ and $3$, so (d') holds. 
		\end{description}
	\end{proofof}

	\section{Appendix}
	It seems that Lehrer\cite{lehrer} was the first to determine which parabolic subgroups of $D_n$ are Gelfand subgroups. His list was incomplete, later Abramenko, Parkinson and Van Maldeghem\cite{APVM} provided the complete answer.
	
	Here we prove the extension to the double cover but for completeness we also state the claim concerning the parabolic subgroups themselves. The proof resembles the proof of Theorem~\ref{main}.
	\begin{theorem}\label{thm:Dn}
		Let $n>3$ and $W$ a Weyl group of type $D_n$. Let $1\leq k\leq n$ and $P\le W$ the parabolic subgroup corresponding to removing the $k$-th node. It is a Gelfand subgroup if and only if $1\leq k\leq n/2$ or $k\geq n-1$. Suppose $n/2<k<n-1$. The double cover $Q=N_W(P)$ is a Gelfand subgroup of $W$ if and only if $n$ is odd or $n$ is divisible by $4$ and $k=n/2+1$.
	\end{theorem}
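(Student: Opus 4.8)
The plan is to run a character-theoretic computation parallel to the proof of Proposition~\ref{prop:main}, organised around the chain $W=W(D_n)\triangleleft W(B_n)=\ZZ_2\wr S_n$.

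\emph{Locating $P$ and $Q$.} Realize $W$ as the group of signed permutations of $\{1,\dots,n\}$ with an even number of sign changes. Deleting the $k$-th node gives $P\cong S_k\times W(D_{n-k})$, acting on $\{1,\dots,k\}$ and on $\{k+1,\dots,n\}$ respectively (with the conventions $W(D_2)=\ZZ_2^2$, $W(D_3)=S_4$). As in the proof of Theorem~\ref{main}, $U=\langle\Phi_0\rangle^\perp$ is the line through $v=e_1+\dots+e_k$, and $Q=N_W(P)=\{w\in W:wv=\pm v\}$. Reading this off, $[Q:P]=2$ whenever $n/2<k<n-1$: $Q/P$ is generated by $\tau=\nu_{[k]}$ when $k$ is even and by $\tau=\nu_{[k]}\nu_{k+1}$ when $k$ is odd (here $\nu_T$ negates the coordinates in $T$), so $Q\cong S_k\times W(D_{n-k})\times\ZZ_2$ for $k$ even and $Q\cong S_k\times W(B_{n-k})$ for $k$ odd — the latter embedded in $W(B_n)$ so that $\tau$ supplies the non-$W(D_{n-k})$ part, whence $Q$ is \emph{not} a parabolic subgroup of $W(B_n)$. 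Consequently $\CC[W/Q]$ is the permutation module of $W$ on the set $\mathcal V$ of antipodal pairs $\{u,-u\}$ of $\{0,\pm1\}$-vectors with exactly $k$ nonzero entries.

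\emph{Descent from $B_n$ to $D_n$.} Let $\epsilon$ be the sign-of-sign-changes linear character of $W(B_n)$, so $W=\ker\epsilon$; since $Q\subseteq W$ we have $\epsilon\!\downarrow_Q=1_Q$, hence $1_Q^{W(B_n)}$ is fixed by $\otimes\,\epsilon$, i.e.\ by the bipartition swap $(\lambda,\mu)\mapsto(\mu,\lambda)$ on $\Irr W(B_n)$. By Mackey, $1_Q^{W(B_n)}\!\downarrow_W=1_Q^W+(1_Q^W)^\gamma$ with $\gamma$ the graph automorphism of $W$. Combining this with the branching rules — $(\lambda,\mu)\!\downarrow_W$ irreducible and $\gamma$-fixed for $\lambda\ne\mu$, while $(\lambda,\lambda)\!\downarrow_W=\chi^+_\lambda+\chi^-_\lambda$ with $\gamma$ interchanging the summands — one finds that $1_Q^W$ is multiplicity-free if and only if (a) $\langle 1_Q^{W(B_n)},(\lambda,\mu)\rangle\le1$ for all bipartitions with $\lambda\ne\mu$, and (b) for $n$ even and $|\lambda|=n/2$, each of $\chi^\pm_\lambda$ occurs in $1_Q^W$ at most once. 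I would settle (a) for every $k$ by decomposing $1_Q^{W(B_n)}$, which up to a linear correction is the permutation module of $W(B_n)$ on $\mathcal V$, via the Young-type branching rules for $W(B_n)$ (Johnson $\times$ folded-hypercube combinatorics). For $n$ odd condition (b) is vacuous, so this already gives the theorem in that case.

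\emph{The split characters, and the companion $P$-statement.} For $n$ even everything reduces to (b): the multiplicity of $\chi^\pm_\lambda$ ($|\lambda|=n/2$) in $1_Q^W$. For each such $\lambda$ occurring in $1_Q^{W(B_n)}$ one must decide whether its $W$-restriction feeds into $\chi^+_\lambda$, into $\chi^-_\lambda$, or into both — equivalently, compute the action of $\tau$ (equivalently of $\gamma$) on the $\tau$-fixed part of the corresponding isotypic piece of $\CC[\mathcal V]$, a parity count recording how the block $[k]$ and the sign flip interact with a self-conjugate bipartition. The expected outcome is that some $\chi^\pm_\lambda$ picks up multiplicity $2$ exactly unless $n\equiv0\pmod4$ and $k=n/2+1$, which is the claimed dichotomy; in the remaining cases one exhibits the repeated constituent explicitly. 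The parallel statement for $P$ is obtained the same way (now $1_P^{W(B_n)}$ is the permutation module of $W(B_n)$ on \emph{all} $\{0,\pm1\}$-vectors with $k$ nonzeros), or simply quoted from \cite{APVM}. The main obstacle is precisely this last point: controlling how the square bipartitions $(\lambda,\lambda)$ break up inside $1_Q^{W(D_n)}$ — the fine interplay between the graph automorphism and the combinatorics of $\mathcal V$, which has no counterpart in types $B_n$, $C_n$ and is exactly what forces the $4\mid n$, $k=n/2+1$ exception.
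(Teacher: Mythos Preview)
Your outline via the $W(B_n)\to W(D_n)$ restriction is a reasonable alternative strategy, but as written it is not a proof: both of the decisive steps are left undone. You say you ``would settle (a)'' by decomposing $1_Q^{W(B_n)}$ but do not carry out the decomposition; since $Q$ is not parabolic in $W(B_n)$ (as you yourself note), this is genuine work, not a citation. More seriously, step (b) --- the behaviour of the split constituents $\chi_\lambda^\pm$ for $|\lambda|=n/2$ --- is exactly where the dichotomy ``$n$ odd, or $4\mid n$ and $k=n/2+1$'' must emerge, and you offer only an ``expected outcome'' before calling it ``the main obstacle''. Without that computation the proposal does not separate the Gelfand cases from the non-Gelfand ones; in particular you have not exhibited the repeated constituent you promise for the failing even-$n$ cases.

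The paper avoids the $B_n$--branching entirely by working with the elementary abelian normal subgroup $V\cong\mathbb{F}_2^{\,n-1}$ of $W$ and applying Clifford theory directly. One decomposes $1_Q^{VQ}=\sum_{i=0}^{\lfloor k/2\rfloor}\chi_i$ into irreducibles indexed by the possible sizes $k-2i$ of subsets $E\subseteq\{1,\dots,k\}$ with $|E|\equiv k\pmod 2$; each $\chi_i^W$ is shown to be multiplicity-free because the relevant inertia quotient reduces to the Gelfand pair $S_{k-2i}\times S_{n-k}\le S_{n-2i}$. The only remaining question is when $\chi_i^W$ and $\chi_j^W$ share a constituent, and this is answered by an explicit $S_n$-orbit computation on $\Irr(V)$: overlap occurs iff $i=j$ or $2i+2j=n$. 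The condition $2i+2j=n$ with $0\le i\ne j\le\lfloor k/2\rfloor$ is then a purely arithmetic constraint on $(n,k)$ which immediately gives the stated list, and for those $(n,k)$ a single Mackey summand exhibits the repeated constituent. This replaces the delicate split-character analysis of your step~(b) with a concrete orbit count on linear characters of $V$, which is why the paper's argument closes and yours, as it stands, does not.
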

	\begin{proof}
		We use the isomorphism $W\cong V\rtimes S_n$, where $V=\mathbb{F}_2^{n-1}$. The action of $\sigma\in S_n$ on $V=\langle v_1,\ldots v_{n-1}\rangle$ is by 
		\[v_i\sigma=\begin{cases}
		v_{i\sigma},&\text{ if }n\sigma=n,\\
		v_{n\sigma},&\text{ if }i\sigma=n,\text{ while }\\
		v_{i\sigma}+v_{n\sigma},&\text{ if }i\sigma ,\,n\sigma<n.
		\end{cases}\]
		The claim about the Gelfand property of $P$ is covered by Theorem~\ref{thm:APVM}. From now on we assume $n/2<k<n-1$. Then $P$ is the Weyl group of the decomposable root system of type $A_{k-1}\oplus D_{n-k}$ (with minor notational changes due to degeneration if $k=n-3,n-2$) and $P\cong S_{k}\times(V_0\rtimes S_{n-k})$, where $V_0=\langle v_{k+1},\ldots v_{n-1}\rangle$. Let $Q=N_W(P)$. As $k>n/2$, $Q= N_V(P)P=\langle v_1+v_2+\cdots +v_k\rangle P$. So let $V_1=Q\cap V=\langle v_1+v_2+\cdots +v_k, v_{k+1},\ldots,v_{n-1}\rangle$.
		
		To determine $1_{Q}^W$ we first decompose $1_{Q}^{VQ}$. Using a special case of Mackey's theorem \cite[Problem~(5.2)]{Isaacs:1976}, ${1_{Q}^{VQ}}\downarrow_V=1_{V_1}^V$, whose constituents correspond to $E\subseteq\{1,\ldots, k\}$ of cardinality $|E|\equiv k\pmod{2}$. Namely, for each such $E$ let $\eta_E\in \Irr(V)$ be such that $v_j\in \Ker(\eta_E)$ if and only if $j\in E$ or $j>k$. Then $1_{V_1}^V=\sum_E\eta_E$. Note that the orbit of a subset $E\subseteq \{1,\ldots,k\}$ under the action of $S_k\times S_{n-k}$ consists of the subsets of the same cardinality. So $1_{Q}^{VQ}=\sum_{i=0}^{[k/2]}\chi_i$, where ${\chi_i}_V=\sum_{|E|=k-2i}\eta_E$. Fix $E=\{2i+1,\ldots,k\}$ and the inertia subgroup of $\eta_E$ in $VQ$ is $M=V(S_{2i}\times S_{k-2i}\times S_{n-k})=I_{VQ}(\eta_E)$. If $\nu_E\in\Irr(M)$ denotes the extension of $\eta_E$ to $M$ which is trivial on $S_{2i}\times S_{k-2i}\times S_{n-k}$, then $\chi_i=\nu_E^{VQ}$ is irreducible by\cite[(6.11)]{Isaacs:1976}.
		
		We claim that $\chi_i^W$ are all multiplicity free and $(\chi_i^W,\chi_j^W)>0$ if and only if $i=j$ or $2i+2j=n$. Justifying this claim is enough, as this latter possibility can occur only for even $n$. But if $n$ is divisible by $4$ then $k=n/2+1$ odd and the largest distinct $2i,2j$ are $k-1,k-3$, their sum is $2k-4=n-2$, so even this is impossible. (But for other even $n$ such pair of distinct even numbers do occur. If $k$ is even, $k>n/2$ then $i=k/2>(n-k)/2=j$, and if $k>2[n/4]+1$ is odd then $i=(k-1)/2>(n-k+1)/2=j$.)
		
		To prove multiplicity freeness, let $i\leq k/2$ be fixed, $E=\{2i+1,\ldots,k\}$ and $L=S_{2i}\times VS_{n-2i}=I_W(\eta_E)$. Then $\eta_E$ extends to $\mu_E\in\Irr(L)$ such that $S_{2i}\times S_{n-2i}\subseteq\Ker(\mu_E)$. Of course, $\mu_E$ also extends $\nu_E$. The other constituents of $\eta_E^L$ are $\mu_E\varphi$, such that $\varphi\in\Irr(L)$ and $Ker(\varphi)\supseteq V$, in particular, all the irreducible constituents of $\nu_E^L$ are of this form. By \cite[(6.11)]{Isaacs:1976}, each $(\mu_E\varphi)^W$ is irreducible. Now, $\chi_i^W=(\nu_E^{VQ})^W=\nu_E^W=(\nu_E^L)^W$ and, by Frobenius reciprocity,
		\[(\nu_E^L,\mu_E\varphi)=(\nu_E,(\mu_E\varphi)_M)=(\nu_E,\nu_E\varphi_M)=(1_M,\varphi_M)=(1_M^L,\varphi)\leq 1,
		\]
		because $M\leq L$ is a Gelfand subgroup. (Consider $M/VS_{2i}\cong S_{k-2i}\times S_{n-k}\leq S_{n-2i}\cong L/VS_{2i}$.) That is, $\nu_E^L$ is multiplicity free. As each of its irreducible constituents are of form $\mu_E\varphi$ and these induce irreducibly to  $W$, so we also get that $\chi_i^W=\nu_E^W$ is multiplicity free, as required.
		
		Given $i\ne j$ the characters $\chi_i^W$ and $\chi_j^W$ share no common constituent if the underlying $\eta_{E_i}$ and $\eta_{E_j}$ are not $W$-conjugate for $E_i=\{2i+1,\ldots,k\},\,E_j=\{2j+1,\ldots,k\}$. Suppose $\sigma$ is such that $\eta_{E_i}^\sigma=\eta_{E_j}$. If $\sigma n=n$ then \[\{\sigma(1),\ldots,\sigma(2i)\}=\{f\mid v_f\notin\Ker(\eta_{E_i}^\sigma) \}=\{f\mid v_f\notin\Ker(\eta_{E_j}) \}=\{1,\ldots,2j \}\]
		have the same cardinality, so $i=j$. If $\sigma m=n$, $2i<m<n$ then $v_{\sigma n}=\sigma v_m\in\Ker(\eta_{E_i}^\sigma)
		$, so
		\[\{\sigma(1),\ldots,\sigma(2i)\}=\{f\mid v_f\notin\Ker(\eta_{E_i}^\sigma) \}=\{f\mid v_f\notin\Ker(\eta_{E_j}) \}=\{1,\ldots,2j \},\]
		again and $i=j$. Finally, if $\sigma m=n$, $m\leq 2i$ then $v_{\sigma n}=\sigma v_m\notin\Ker(\eta_{E_i}^\sigma)$, so
		\[\{\sigma(n),\sigma(2i+1),\ldots
		\sigma(n-1)\}=\{f\mid v_f\notin\Ker(\eta_{E_i}^\sigma) \}=\{f\mid v_f\notin\Ker(\eta_{E_j}) \}=\{1,\ldots,2j \}\]
		have the same cardinality $n-2i=2j$, that is $n=2i+2j$, as required. The claim is established.
		
		Let now $n$ be even, $j=[k/2]>i=n/2-j>0$ and $E_i=\{2i+1,\ldots,k\},\,E_j=\{2j+1,\ldots,k\}$. Let $\sigma$ be the transposition $(1,2j+1)(2,2j+2)\cdots (2i,n)$, hence $\eta_{E_j}^\sigma=\eta_{E_i}$. If $M_i=I_{VQ}(\eta_{E_i})$ and $M_j=I_{VQ}(\eta_{E_j})$ then $\nu_{E_i}\downarrow_{M_j^\sigma\cap M}=\nu_{E_j}^\sigma\downarrow_{M_j^\sigma\cap M}$. By using Mackey's theorem\cite[Problem~(5.6)]{Isaacs:1976},  we get
		\[(\chi_i^W,\chi_j^W)=(\nu_{E_i}^W,\nu_{E_j}^W)=(\nu_{E_i},\nu_{E_j}^W\downarrow_{M_i})=\sum_{W=\cup M_j g M_i}(\nu_{E_i},(\nu_{E_j}^g\downarrow_{M_j^g\cap M_i})^{M_i}).
		\]
		Among the summands is \[(\nu_{E_i},(\nu_{E_j}^\sigma\downarrow_{M_j^\sigma\cap M_i})^{M_i})=(\nu_{E_i},(\nu_{E_i}\downarrow_{M_j^\sigma\cap M_i})^{M_i})=(\nu_{E_i}\downarrow_{M_j^\sigma\cap {M_i}},\nu_{E_i}\downarrow_{M_j^\sigma\cap {M_i}})=1,\]
		hence $1_{Q}^W=\sum_{i=0}^{[k/2]} \chi_i^W$ is not multiplicity free.
	\end{proof}

\end{document}